\def\shu{\joinrel{\!\scriptstyle\amalg\hskip -3.1pt\amalg}\,}
\newcommand{\un}{\hbox{\bf 1}}
\def\pack{{\rm pack}}
\def\cal{\mathcal}
\def\Sym{{\bf Sym}}     % NCSF
\def\NCSF{{\bf Sym}}    % NCSF
\def\Surj{{\mathrm{Sj}}}
\def\SURJ{{\mathbf{Sj}}}
\newtheorem{example}{Example}[section]
\newtheorem{theorem}[example]{Theorem}
\newtheorem{definition}[example]{Definition}
\newtheorem{proposition}[example]{Proposition}
\newtheorem{lemma}[example]{Lemma}
\def\up#1{\raise 1ex\hbox{\footnotesize#1}}
\def\U0{{\cal U}_0(gl_N)}
\def\<{\langle}
\def\>{\rangle}
\def\shuff#1#2{\mathbin{
      \hbox{\vbox{
        \hbox{\vrule
              \hskip#2
              \vrule height#1 width 0pt
               }%
        \hrule}%
             \vbox{
        \hbox{\vrule
              \hskip#2
              \vrule height#1 width 0pt
               \vrule }%
        \hrule}%
}}}
\def\shuffl{{\mathchoice{\shuff{7pt}{3.5pt}}%
                        {\shuff{6pt}{3pt}}%
                        {\shuff{4pt}{2pt}}%
                        {\shuff{3pt}{1.5pt}}}}%
\def\ash{\, \underline{\shuffl} \, }
\def\DessinMatrix#1{\vcenter{\hbox{\makebox[1.7ex]{$#1$}}}}
\def\GenMatrix#1{\vcenter{\halign{&$\DessinMatrix{##}$\cr#1}}\egroup}
\def\setinterlineskip#1{\baselineskip=0pt
  \lineskip=#1 \lineskiplimit=\maxdimen}
\def\matrice{%
  \bgroup
  \let\ =\omit
  \let\\=\cr
  \setinterlineskip{4.0pt}
  \GenMatrix}
\def\DessinsMatrix#1{\vcenter{\hbox{\makebox[1.3ex]{$\scriptstyle#1$}}}}
\def\GensMatrix#1{\vcenter{\halign{&$\DessinsMatrix{##}$\cr#1}}\egroup}
\def\smallmatrice{%
  \bgroup
  \let\ =\omit
  \let\\=\cr
  \setinterlineskip{3.0pt}
  \GensMatrix}
\newlength{\Hackl}
\newcommand{\Hack}{\vrule height \Hackl width 0pt}
\newcommand{\indexmat}%
    {\smallmatrice{\Hack a\\\Hack b\\\Hack c\\\Hack d\\\Hack e\\}}
\newdimen\Squaresize \Squaresize=14pt
\newdimen\Thickness \Thickness=0.5pt
\def\Square#1{\hbox{\vrule width \Thickness
   \vbox to \Squaresize{\hrule height \Thickness\vss
      \hbox to \Squaresize{\hss#1\hss}
   \vss\hrule height\Thickness}
\unskip\vrule width \Thickness}
\kern-\Thickness}
\def\Vsquare#1{\vbox{\Square{$#1$}}\kern-\Thickness}
\title[Exponential Lie series]%
{Flows and stochastic Taylor series\\ in It\^o calculus}
\author[Ebrahimi-Fard,  Malham, Patras and Wiese]{
Kurusch Ebrahimi--Fard$^\MakeLowercase{a}$, 
Simon J.A.~Malham$^\MakeLowercase{b}$,\\  
Fr\'ed\'eric Patras$^\MakeLowercase{c}$
and Anke Wiese$^\MakeLowercase{b}$}
\address[a]{
ICMAT, 
CSIC-UAM-UCM-UC3M, 
C/ Nicol\'as Cabrera 13-15, 28049 Madrid, Spain. 
On leave from UHA, Mulhouse, France.
}
\address[b]{Maxwell Institute for Mathematical Sciences
and School of Mathematical and Computer Sciences,
Heriot-Watt University, Edinburgh EH14 4AS, UK.
}
\address[c]{
Labo.~de Math\'ematiques J.A.~Dieudonn\'e,
Universit\'e de Nice - Sophia Antipolis,
Parc Valrose, 06108 Nice Cedex 02, France.
}
\date{\today}
\begin{document}

\begin{abstract}
For stochastic systems driven by continuous semimartingales an explicit formula for the logarithm of the  It\^o flow map is given. A similar formula is also obtained for solutions of linear matrix-valued SDEs driven by arbitrary semimartingales. The computation relies on the lift to quasi-shuffle algebras of formulas involving products of It\^o integrals of semimartingales. Whereas the Chen--Strichartz formula computing the logarithm of the Stratonovich flow map is classically expanded as a formal sum indexed by permutations, the analogous formula in It\^o calculus is naturally indexed by surjections. This reflects the change of algebraic background involved in the transition between the two integration theories. 
\end{abstract}

\maketitle

%\tableofcontents

%%%%%%%%%%%%%%%%%%%%%%%%%
%%%%%%%%%%%%%%%%%%%%%%%%%

\section{Introduction}
\label{sect:intro}

The setting of our work is classical stochastic calculus, as exposed, e.g., in the classical textbooks \cite{Jacod2002,Protter}. Its aim is to obtain an explicit formula for the logarithm of the It{\^o} flow map associated to a stochastic differential system, generalizing the classical work of Ben Arous \cite{Benarous} on Stratonovich flows and stochastic Taylor series. 

In this introduction, we briefly state the main result of this work. A description of the historical background and details on the underlying definitions and objects of study are postponed to the next section.

Let  $\{X^1,\, X^2, \ldots , X^N\}$ be scalar continuous semimartingales. We assume, without loss of generality, that $X^i_0=0$ and that their quadratic covariation, or square bracket operation, is such that $[X^i, X^j]\equiv 0$ for $i \not= j$. We consider the general stochastic differential system (in what follows the notation $\int_0^t \cdots \mathrm{d}X^i_s$ refers to It\^o integrals)
\begin{equation}\label{stochSys}
	Y_t  = \, Y_0 + \sum_{i=1}^N \int_0^tV_i(Y_s)\,\mathrm{d}X^i_s,
\end{equation}
where  $V_i: \mathbb R^d \to \mathbb R^d$ are smooth vector fields. In the following, we will identify $V_i$ with the partial differential operator $V_i\partial_y:=\sum_{j=1}^d V_i^j\partial_{y_j}$. For $i=N+1, \ldots,\, 2N$, let $X^i$ denote the quadratic variation of $X^{i-N}$, that is $X^i=[X^{i-N}, X^{i-N}]$, and define the second order differential operator $V_i$ by 
\begin{equation}\label{Vi}
	V_i := \frac{1}{2} \sum_{j,\, k=1}^d V_i^j V_i^k\partial_{y_jy_k}.
\end{equation}

By analogy to the Stratonovich stochastic system driven by Wiener processes in \cite{Baudoin}, we define the stochastic partial differential operator $S$ as 
\begin{align*}
	S&= \sum_{i=1}^{2N} V_iX^i.
\end{align*}
Since the differential operators $V_i$ are time-homogeneous, we also have $\mathrm{d}S_t  := \,\sum_{i=1}^{2N}V_i\,\mathrm{d}X^i_t$. For $n\ge 1$, let $\int S^n$ denote the $n$-times repeated integral of $S$ and set $\int S^0=\mathrm{Id}$, so that
\begin{align*}
	\int S^n&\, = \int\cdots\int \mathrm{d}S_{t_1}\cdots \mathrm{d}S_{t_n} 
			= \sum_{j_1, j_2, \ldots , j_n=1}^{2N} \!\!\! V_{j_1}\circ \cdots \circ V_{j_n}
				\idotsint \mathrm{d}X_{t_1}^{j_1}\!\ldots\! \mathrm{d}X^{j_n}_{t_n}.		
\end{align*}
The It{\^o}-Taylor series expansion for the flowmap $\varphi_t: Y_0\to Y_t$ corresponding to the stochastic system is given by
\begin{align*}
	\varphi_t = \sum_{n\ge 0}\int S^n \circ \mathrm{Id}.
\end{align*}
We set $\mathbb{S}:=\sum_{n\ge 0}\int S^n=\mathrm{Id} + \int \mathbb{S}\mathrm{d}S$, which describes the action of the flowmap on smooth functions. The central aim of this work is to calculate $\log {\mathbb S}$ in the It\^o framework. 

The computation of the logarithm of this action may be considered as the stochastic analog of a well-known problem in the theory of classical differential equations (motivated, e.g., by numerical considerations, and referred often to as the continuous Baker--Campbell--Hausdorff problem). And it turns out that, provided one uses Stratonovich integrals, the solution for stochastic differential equations is essentially the same as in the classical case. We refer the reader to the next section for more details. 

Displaying the first few terms of $\log {\mathbb S}$ in the It\^o framework may give an idea of the complexity of its expression. We adopt the notation $I_{[i, j]}=[X^i, X^j]$, and  for a repeated It{\^o} integral $I_{j_1, \ldots ,\, j_n} := \idotsint \mathrm{d}X^{j_1}\cdots \mathrm{d} X^{j_n}$. Then the first three terms are as follows
\begin{align*}
	\log {\mathbb S}  = 	&\, \sum_i V_i I_i + \sum_{i,j} V_iV_j \biggl(\frac{1}{2} I_{ij} 
						-\frac{1}{2}\bigl(I_{ji}+I_{[i,j]}\bigr)\biggl) \\
					& \, + \sum_{i,j,k} V_iV_jV_k \biggl(\frac{1}{3} I_{ijk}
						-\frac{1}{6}\bigl(I_{jik}+I_{kij}+I_{[i,j]k}+I_{j[i,k]}\bigl)\\
					&\,\qquad - \frac{1}{6}\bigl(I_{ikj}+I_{jki}+I_{i[j,k]}\bigl)
						+\frac{1}{3}\bigl(I_{kji}+I_{[j,k]i}+I_{k[i,j]}\bigl)\biggl) + \cdots.
\end{align*}

To write the general expression for this expansion, let us introduce some notation. For $f$ a surjection from the set $[n]:=\{1,\ldots,n\}$ to the set $[k]$ (written $f\in {{ Sj}}_{n,k}$), we set 
$$
	d(f):=|\{i<n,f(i)\geq f(i+1)\}|.
$$ 
The set of surjections $f$ from $[n]$ to $[k]$ such that $\forall i\leq k,\ |f^{-1}\{i\}|\leq 2$ is written ${ Sj}^{(2)}_{n,k}$
For a sequence $J=(j_1,\ldots,j_n)$ of elements of $[2N]$ and $\mathcal{A}=A_1\coprod \cdots \coprod A_k=[n]$ an ordered partition of the set $[n]$ into disjoint subsets, we write $I_{\mathcal A}^J$ for the iterated integral $\int\cdots\int \mathrm{d}X_J^{A_1} \cdots \mathrm{d}X_J^{A_k}$, where, for $A_i=\{a_1,\ldots ,a_l\}$, $X_J^{A_i}$ stands for the iterated quadratic covariation $[X^{j_{a_1}},\ldots,X^{j_{a_l}}]:=[X^{j_{a_1}},[X^{j_{a_2}},\ldots,X^{j_{a_l}}]]$. For $f$ as above, we set $\mathcal{A}(f):=f^{-1}(1)\coprod \cdots \coprod f^{-1}(k)$ and 
$$
	S_f:=\sum\limits_{i_1,\ldots,i_n=1}^{2N}V_{i_1}\cdots V_{i_n} I^{\{i_1,\ldots,i_n\}}_{\mathcal{A}(f)}.
$$
 
Our main result reads

\begin{theorem}\label{mainTH}
We have:
$$
	\log ({\mathbb S})=\sum_{n>0} \sum\limits_{n\geq k\geq 1}\sum\limits_{f\in { {Sj}^{(2)}_{n,k} }}
								\frac{(-1)^{d(f)}}{n}\cdot {n-1 \choose{d(f)}}^{-1}S_f.
$$
\end{theorem}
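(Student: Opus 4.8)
The plan is to recognise $\mathbb{S}$ as a group-like element of a quasi-shuffle Hopf algebra, to extract $\log(\mathbb{S})$ by the canonical projector onto primitive elements, and then to evaluate that projector in the basis indexed by surjections. The starting point is the algebraic form of It\^o's product rule: for iterated It\^o integrals one has $I_uI_v=I_{u\qshuff v}$, the quasi-shuffle expanding as the sum over all interleavings of the two words $u,v$ together with the contraction terms in which two letters, one from each word, are fused through their quadratic covariation. Writing $\mathbb{S}=\sum_w V_wI_w$, where $w=(j_1,\ldots,j_n)$ ranges over words in $[2N]$, $V_w:=V_{j_1}\cdots V_{j_n}$ and $I_w$ is the straight iterated integral, one checks (exactly as in the order-two expansion displayed above) that the product governing the powers of $\mathbb{S}$ concatenates the operator words and multiplies the integrals by the quasi-shuffle. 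Since $w\mapsto I_w$ is thereby a quasi-shuffle character, $\mathbb{S}$ is group-like in the completed quasi-shuffle Hopf algebra over the alphabet $[2N]$, whose underlying combinatorics is that of surjections (the Hopf algebra $\SURJ$); in particular $\log(\mathbb{S})$ is a primitive, i.e.\ Lie, element.

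Being primitive, $\log(\mathbb{S})$ is the image of $\mathbb{S}$ under the first Eulerian, or Dynkin, idempotent $e_1=\log^{*}(\mathrm{Id})=\sum_{m\ge1}\frac{(-1)^{m-1}}{m}(\mathrm{Id}-\eta\epsilon)^{*m}$. \emph{The heart of the argument, and the step I expect to be the main obstacle, is the explicit evaluation of $e_1$ in the surjection basis}: one must show that, when the convolution powers $(\mathrm{Id}-\eta\epsilon)^{*m}$ are expanded and grouped according to the surjection $f$ that records how the quasi-shuffle reorders and fuses the concatenated letters, the total coefficient of $S_f$ depends only on the descent statistic $d(f)$ and equals $\frac{(-1)^{d(f)}}{n}\binom{n-1}{d(f)}^{-1}$. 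This is precisely the transposition from permutations to surjections of the classical Chen--Strichartz evaluation of the Dynkin idempotent inside the descent algebra of the symmetric group, reflecting the change of algebraic background from $\FQSym$ to $\SURJ$ announced in the abstract; I would carry it out by the standard generating-function manipulation of these convolution powers, now book-kept by descents of surjections rather than of permutations.

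Finally one passes from the abstract formula to its stochastic realization. A fiber $f^{-1}(i)=\{a_1,\ldots,a_\ell\}$ contributes to $S_f$ the iterated quadratic covariation $[X^{j_{a_1}},\ldots,X^{j_{a_\ell}}]$; for continuous semimartingales every such bracket with $\ell\ge3$ vanishes identically, since $[X^{j},X^{k}]$ is continuous of finite variation and the covariation of a continuous finite-variation process with any semimartingale is zero. Hence the fusion product is binary-nilpotent in the realization, every term indexed by a surjection with a fiber of size at least three is annihilated, and the index set of the surviving sum collapses exactly to $Sj^{(2)}_{n,k}$, which yields the stated formula.
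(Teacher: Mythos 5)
Your outline reproduces the paper's architecture: Proposition \ref{prodstoc} (and its iterated form, Proposition \ref{prodrule}) makes the realization map $f\mapsto S_f$ an algebra morphism from $(\SURJ,\diamond)$ to stochastic integral operators, $\mathbb{S}$ is the image of the series $I=\sum_n p_n$ of identity surjections, so $\log(\mathbb{S})$ is the image of the $\diamond$-logarithm $\log(I)$; and your last paragraph on the vanishing of triple brackets of continuous semimartingales, which collapses the index set to $Sj^{(2)}_{n,k}$, is exactly the paper's closing remark. The problem is that the step you yourself flag as ``the heart of the argument and the main obstacle'' --- that the coefficient of $S_f$ in $\log(I)$ depends only on $d(f)$ and equals $\frac{(-1)^{d(f)}}{n}{n-1\choose d(f)}^{-1}$ --- is the entire content of the theorem, and ``the standard generating-function manipulation of these convolution powers, now book-kept by descents of surjections'' is an announcement, not an argument. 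Nothing a priori guarantees that expanding the convolution powers of $\mathrm{Id}-\eta\epsilon$ yields linear combinations of surjections whose coefficients are constant on descent classes: for permutations this is Solomon's theorem that the descent sums span a subalgebra of the group algebra, and the analogous statement for $\SURJ$ is precisely the structural fact you must supply before any ``transposition from permutations to surjections'' can be invoked.

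The paper closes this gap with a short self-contained chain (the Lemma of Section \ref{IBCH} and Theorem \ref{mainth}). First, the elements $D_{\subseteq I}^n$ (sums of surjections with descent set contained in $I$) span a subalgebra of $(\SURJ,\diamond)$ isomorphic to the free algebra $\NCSF$ on generators $1_n$, via $\iota(1_{n_1}\ast\cdots\ast 1_{n_k})=D_{\subseteq\{n_1,n_1+n_2,\ldots,n_1+\cdots+n_{k-1}\}}$; the proof is the one-line observation that the $\diamond$ product imposes no constraint between $f(n_1+\cdots+n_k)$ and $f(n_1+\cdots+n_k+1)$. Second, $\log\bigl(\sum_n 1_n\bigr)=\sum_{\overline n=n_1,\ldots,n_k}\frac{(-1)^{k-1}}{k}1_{\overline n}$ is immediate in the free algebra, whence $\log(I)=\sum_n\sum_{I\subseteq[n-1]}\frac{(-1)^{|I|}}{|I|+1}D_{\subseteq I}^n$. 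Third, re-expanding in the basis $D_I^n$ gives the coefficient $\sum_{I\subseteq J\subseteq[n-1]}\frac{(-1)^{|J|}}{|J|+1}=\frac{(-1)^{|I|}}{n}{n-1\choose |I|}^{-1}$ by the beta-integral identity. Note also that your appeal to group-likeness, primitivity and the Eulerian idempotent is dispensable: the paper deliberately uses no coproduct on $\SURJ$ (and primitivity is irrelevant here, since Theorem \ref{mainTH} is stated with operator products, not Lie brackets); the algebra morphism property of Proposition \ref{prodrule} is all that is needed to transport an ordinary power-series logarithm. With the subalgebra lemma and the two elementary computations above supplied, your proof becomes complete and coincides with the paper's.
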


This statement follows from Theorem \ref{mainth}. The restriction of the indexing set to ${ Sj}^{(2)}_{n,k}$ follows from the fact that, for continuous semimartingales, iterated brackets such as $[[X_i,X_j],X_k]$ vanish. When allowing semimartingales with jumps (the article will develop the combinatorial theory of iterated integrals of semimartingales in this more general setting), this restriction disappears.

For example, with $f$ the surjection from $[3]$ to $[2]$, defined by 
$$
	f(1)=2,\ f(2)=1,\ f(3)=2,
$$ 
we obtain $d(f)=1$, and 
$$
	\frac{(-1)^{d(f)}}{n}\cdot {n-1 \choose{d(f)}}^{-1}=\frac{(-1)}{3}\cdot {2 \choose{1}}^{-1}= - \frac{1}{6}
$$ 
and $S_f=\sum\limits_{i,j,k}V_iV_jV_kI_{j[ik]}$, as expected from the low order direct computation given previously.

The rest of the paper develops the formalism necessary to prove the above theorem. Several of the tools that enter our approach are of general interest, and allow to handle the algebraic structures of iterated It\^o integrals. We also show how the same ideas can be applied to the study of linear stochastic matrix differential equations driven by arbitrary semimartingales (Theorem~\ref{thm:nc-Log-Ito}).

\medskip

{\bf{Acknowledgements}}: The first author is supported by a Ram\'on y Cajal research grant from the Spanish government. The third author acknowledges support from the grant ANR-12-BS01-0017, Combinatoire Alg\'ebrique, R\'esurgence, Moules et Applications.

%%%%%%%%%%%%%%%%%%%%%%%%% 
%%%%%%%%%%%%%%%%%%%%%%%%%

\section{The Strichartz formula}
\label{sect:Strichartz}

Let us recall first the historical as well as technical background of Theorem \ref{mainTH}. Its knowledge will help to enlighten our forthcoming constructions, and make clear to what extend It\^o calculus differs from the Riemann or Stratonovich calculus.

From the seminal 1957 work by K.T.~Chen on the algebraic structures underlying products of iterated integrals \cite{chen0} followed the existence of an exponential solution of the classical nonautonomous initial value problem 
$$
	\dot{Y}(t)=F(t,Y(t)), \quad\ Y(0)=Y_0, 
$$
where $F(t): \mathbb{R}^d \to \mathbb{R}^d$ is a vector field depending continuously on time. 

Over the decades following Chen's work, the explicit formula for this exponential solution was obtained independently by several authors. Mielnik and Plebanski \cite{MP70} as well as Strichartz \cite{Strichartz} calculated the function $\Omega(t)$ such that $Y(t)=\exp(\Omega(t))Y_0$. Using the fact that Stratonovich integrals obey Chen's rules of calculus for iterated integrals, Ben Arous showed soon after the work of Strichartz, that the application domain of this formula extends to stochastic realm \cite{Benarous,Baudoin}.

The explicit expression of the series $\Omega(t)$ is rather intricate. Its most classical formulation involves permutations and iterated integrals of iterated Lie brackets    
$$
	\Omega(t) = \sum_{n>0} \sum_{\sigma \in S_n} \frac{(-1)^{d(\sigma)}}{n^2 {n-1 \choose d(\sigma)}} 
	\int_{\Delta^n_{[0,t]}} [\cdots[F(s_{\sigma(1)}),F(s_{\sigma(2)})], \cdots ],F(s_{\sigma(n)})] {\mathrm{d}}s_1 \cdots  {\mathrm{d}}s_n,
$$       
where the bracket of vector fields follows from their interpretation as differential operators, and the integration domain is the $n$-dimensional simplex 
$$
	\Delta^n_{[0,t]}:=\{(s_1,\ldots,s_n),\  0\le s_1 \le \cdots \le s_n \le t\}.
$$ 
In the formula for $\Omega(t)$, $S_n$ denotes the set of permutations of the set $[n]:=\{1,\ldots, n\}$. The quantity $d(\sigma)$ (already introduced in the more general case of surjections) is called the number of descents of the permutation $\sigma \in S_n$, that is the number of positions in the permutation $(\sigma(1),\ldots, \sigma(n))$, where $\sigma(i)>\sigma(i+1)$, for $i=1,\ldots,n-1$. The formula is known in the literature as Strichartz, Chen--Strichartz or continuous Baker--Campbell--Hausdorff formula. 

It can also be stated by replacing iterated Lie brackets by standard operator products. The formula is then essentially the same except for the scalar coefficient, which becomes $(-1)^{d(\sigma)} n^{-1} {n-1 \choose d(\sigma)}^{-1}$ -the same kind of scalar coefficient as those appearing in Theorem \ref{mainTH}. We refer to \cite{Strichartz} for details on the analytic background, and to \cite{Reutenauer,NCSF} for the underlying combinatorics of free Lie algebras and Lie idempotents.

The derivation of the formula for $\Omega(t)$ relies on a precise understanding of the calculus of iterated integrals, which in turn is based on integration by parts. For two scalar valued indefinite integrals $F(t):= \int^t_0 f(s){\mathrm{d}}s $ and $G(t):=\int^t_0 g(s){\mathrm{d}}s$, recall that
$$
	F(t) \cdot G(t) = \int^t_0 f(s) G(s){\mathrm{d}}s +  \int^t_0 F(s) g(s){\mathrm{d}}s.    
$$
For general iterated integrals 
$$
	F_n(t):=\int_{\Delta^n_{[0,t]}} f_1(s_1)  f_2(s_2) \cdots f_n(s_n) {\mathrm{d}}s_1 \cdots  {\mathrm{d}}s_n,
$$ 
the above product generalizes to Chen's shuffle product formula
\begin{align}
\label{ChenCl}
	F_n(t) \cdot F_m(t) = \sum_{\sigma \in \mathrm{Sh}_{n,m}} 
	\int\limits_{\Delta^{n+m}_{[0,t]}} f_{\sigma^{-1}(1)}(s_1) 
	 \cdots f_{\sigma^{-1}(n+m)}(s_{n+m}){\mathrm{d}}s_1 \cdots  {\mathrm{d}}s_{n+m},
\end{align}
where $\mathrm{Sh}_{n,m}$ is the set of $(n,m)$-shuffles, i.e., permutations $\sigma$ on the set $[n+m]$, such that $\sigma(1)<\cdots < \sigma(n)$ and $\sigma(n+1)<\cdots <\sigma(n+m)$. 

The above product is conveniently abstracted into an algebraically defined shuffle product on words. Let $Y:=\{y_1,y_2,\ldots\}$ be an alphabet, and $Y^*$ the corresponding free monoid of words $\omega=y_{i_1} \cdots y_{i_n}$. The vector space ${\mathbb K}\langle Y\rangle$, which is freely generated by $Y^*$, is a commutative algebra for the shuffle product:
\begin{equation}
\label{shuf-prod}
	v_1\cdots v_p\shu v_{p+1}\cdots v_{p+q}:=
		\sum_{\sigma \in \mathrm{Sh}_{p,q}}v_{\sigma_1^{-1}}\cdots v_{\sigma_{p+q}^{-1}}
\end{equation}
with $v_j\in Y$, $j \in\{1,\ldots ,p+q\}$. We define the empty word $\un$ as unit: $\un \shu v = v \shu \un =v$ for $v \in Y^*$. This product is homogenous with respect to the length of words and can be defined recursively. Indeed, one can show that:
\begin{align}
\label{shuf-prod-rec}
	v_1\cdots v_p \shu v_{p+1}\cdots v_{p+q} = &v_1\big( v_2\cdots v_p \shu v_{p+1}\cdots v_{p+q}  \big) \\
									   &\qquad	+ v_{p+1}\big( v_1\cdots v_p \shu v_{p+2}\cdots v_{p+q} \big). \nonumber 
\end{align}
The shuffle product was axiomatized in the 50's in the seminal works of Eilenberg--MacLane and Sch\"utzenberger \cite{EM,schutz}, and has proven to be essential in many fields of pure and applied mathematics. In \cite{chen1,chen2} Chen studied fundamental groups and loop spaces. In control theory, Chen's abstract shuffle product plays a central role in Fliess' work \cite{fliess}, which combines iterated integrals and formal power series in noncommutative variables into an algebraic approach to nonlinear functional expansions. Reutenauer's monograph on free Lie algebras \cite{Reutenauer} imbeds Chen's work into an abstract Hopf algebra theoretic setting. More recently, Chen's formalism came to prominence in Lyons' seminal theory of rough paths \cite{lyons}. 

When dealing with iterated It\^o integrals of semimartingales, this machinery of shuffle products does not apply any more. One has instead to use the quasi-shuffle product \cite{cemw}. Its definition will be recalled further below. We simply mention for the time being that, although it was discovered and investigated much more recently, it appears to be as important as the shuffle product, both from a theoretical as well as applied point of view. Indeed, it encodes the algebraic structure of discrete sums, as the shuffle product encodes the one of integration maps, and appears in various domains, e.g., multiple zeta values, Rota--Baxter algebras, and Ecalle's mould calculus. The latter is related to the computation of normal forms in the theory of dynamical systems. We refer to \cite{eg,kp2013,fpt,Hoffman} for further historical and technical details on quasi-shuffle algebras.

%%%%%%%%%%%%%%%%%%%%%%%%%
%%%%%%%%%%%%%%%%%%%%%%%%%

\section{Semimartingales}
\label{sect:semimart}

Recall that a process $X$ is a {\em semimartingale}, if $X$ has a decomposition $X_t = X_0 + M_t + A_t$ for $t\ge 0$, where $M_0=A_0=0$, and where $M$ is a local martingale and $A$ is an adapted process that is right-continuous with left limits and has finite variation on each finite interval $[0, t]$.  
%Such a decomposition is unique, see e.g. \cite{Jacod2002}.

It is well-known that the space of semimartingales with multiplication forms an associative algebra. The {\em quadratic covariation} or {\em square bracket process} $[X, Y]$ between two semimartingales $X$ and $Y$, is defined via their product as follows
\begin{eqnarray}
\label{eq:covariation}
	X \cdot Y = X_0Y_0 + \int X_-\, \mathrm{d}Y + \int Y_-\, \mathrm{d}X + [X,Y].
\end{eqnarray}
The quadratic covariation of a process $X$ with itself is known as its {\em quadratic variation}. Let $X$, $Y$ and $Z$ be semimartingales, the quadratic covariation satisfies:
\begin{enumerate}

\item
$[X, 0]  = 0$; 

\item
$[X, Y]  = [Y, X]$;

\item 
$[X, [Y, Z]]  =  [[X, Y], Z]$.

\end{enumerate}

Hence, the square bracket process defines a {\em commutative} and {\em associative product} on the space of semimartingales. We refer to the monographs by Protter \cite{Protter} and Jacod \& Shiryaev \cite{Jacod2002} for details.

For notational convenience, we will write from now on iterated integrals of semimartingales as follows:
$$
	\int XY := \int X_-\ \mathrm{d}Y, \quad {\mathrm{and}}\quad  \int X_1 \cdots X_n:=\int (\int X_1 \cdots X_{n-1})_-\, \mathrm{d}X_n.
$$
Iterated brackets are denoted by:
\begin{equation}
\label{star-prod}
	X \star Y:=[X,Y], \quad {\mathrm{and}}\quad  X_1\star  \cdots \star X_n:=[ \cdots [X_1,X_2], \cdots ,X_n].
\end{equation}

From now on we will assume that all processes are normalized so that $X_0=0$. Terms such as $X_0Y_0$, can therefore be ignored in products of stochastic integrals, e.g., as in equation (\ref{eq:covariation}).

Let us briefly illustrate the combinatorial nature of iterated It\^o integrals, before turning to the general, and more abstract picture. Recall first that, for arbitrary semimartingales $A,B,C,D$ and $X:=\int A_-\ {\mathrm{d}}B$, $Y:=\int C_-\ {\mathrm{d}}D$, we have
\begin{equation}
\label{qshheq}
	X \cdot Y = \int (XC)_-\ \mathrm{d}D + \int (AY)_-\ \mathrm{d}B + \int (AC)_-\ \mathrm{d}[B,D],
\end{equation}
so that, for example, for $B:=\int {\mathrm{d}}B$, $Y:=\int C_-\ {\mathrm{d}}D$
\allowdisplaybreaks{
\begin{eqnarray*}
	B \cdot Y 	&=& \int (CB)_-\ \mathrm{d}D + \int Y_-\ \mathrm{d}B + \int C_-\ \mathrm{d}[B,D]\\
		       	&=& \int CDB + \int (\int C_-\ \mathrm{d}B + \int B_-\ \mathrm{d}C +[B,C])_-\ {\mathrm{d}}D 
			+ \int C_-\ \mathrm{d}[B,D]\\
			&=& \int BCD + \int CBD + \int CDB + \int [B,C]D + \int C[B,D].
\end{eqnarray*}}
Whereas the first three terms of the expansion, i.e., $\int BCD+\int CBD+\int CDB$, are those that would also appear in the shuffle product expansion, the last two terms arise from the bracket operation. These extra terms are typical outcomes of what distinguishes quasi-shuffle and shuffle computations. The algebra underlying the quasi-shuffle calculus has been explored in \cite{fpt,npt}. In a nutshell, the bracket terms that appear in the above product require the replacement of permutations in the calculation of the Chen--Strichartz formula by the larger class of surjections.

%%%%%%%%%%%%%%%%%%%%%%%%%
%%%%%%%%%%%%%%%%%%%%%%%%%

\section{Quasi-shuffle algebra}
\label{sect:quasiShuf}

We recall now the definition and basic properties of quasi-shuffle algebras. In spite of the fact that such algebras encode naturally It\^o integral calculus, they appeared only in a few papers in the context of stochastic integration, see e.g.~the works by Gaines, and Liu and Li \cite{Gaines,LiLiu} and \cite{cemw}. As the name indicates, quasi-shuffle algebras are obtained as a deformation of classical shuffle algebras. It is generally agreed that the idea can be traced back to the work of P.~Cartier on free commutative Rota--Baxter algebras \cite{Cartier}. However, it was formalized only recently by M.~Hoffman \cite{Hoffman}. The link between Hoffman's ideas and commutative Rota--Baxter algebras was explored in \cite{eg}.

Abstractly, a quasi-shuffle algebra is defined as a commutative algebra $B$ with product $\bullet$, which is equipped with two extra bilinear products denoted $\uparrow$ and $\downarrow$ (known as ``half-shuffles''), such that for $x, y, z \in B$ one has
\allowdisplaybreaks{
\begin{eqnarray}
\label{asheax}
	 x \uparrow y &=& y \downarrow x,\\
	(x\bullet y)\uparrow z &=& x\bullet (y\uparrow z) \\
	(x \uparrow y) \uparrow z &=& x \uparrow (y \uparrow z + y \downarrow z + y \bullet z).
\end{eqnarray}}

One writes usually 
$$
	x \ash y:= x \uparrow y + x \downarrow y + x\bullet y,
$$ 
and calls $\ash$ the quasi-shuffle product. In particular the last axiom then reads $(x\uparrow y)\uparrow z=x\uparrow (y\ash z)$. 
Note that when the product $\bullet$ on $B$ is the null product, then one recovers the usual axioms for shuffle algebras \cite{EM,schutz}. 

Let us mention that the ``deformation'' induced by the $\bullet$ product can be understood in terms of -- weight-one -- commutative Rota--Baxter algebras. The term $y \uparrow z + y \downarrow z + y \bullet z$ can then be interpreted as the so-called double Rota--Baxter product. We refrain however from developing these ideas here, since they are only indirectly relevant to our present purposes. The interested reader is referred to the survey paper \cite{kp2013} for an overview of the links between the theories of Rota--Baxter algebras, integral calculus and (quasi-)shuffle algebras. 

The standard example of a quasi-shuffle algebra, studied in detail in \cite{Hoffman}, is provided by the tensor algebra $T(A):=\bigoplus_{n\in \mathbb{N}}A^{\otimes n}$ over a commutative algebra $(A,\ast)$. The three products $\uparrow$, $\downarrow$, $\bullet$ are defined inductively by: $a \bullet b:=a \ast b$, $a \uparrow b := ba$, $a \downarrow b :=ab$, and
\allowdisplaybreaks{
\begin{eqnarray*}
	a_1 \cdots a_n \uparrow b_1 \cdots b_m
		&:=& (a_1 \cdots a_{n-1} \ash b_1 \cdots b_m)a_n,\\
	a_1 \cdots a_n \downarrow b_1 \cdots b_m 
		&:=& (a_1 \cdots a_n \ash b_1 \cdots b_{m-1})b_m,\\
	a_1 \cdots a_n \bullet b_1 \cdots b_m
	&:=& (a_1 \cdots a_{n-1} \ash b_1 \cdots b_{m-1})(a_n\ast b_m),
\end{eqnarray*}}
where we used the common word notation $a_1 \cdots a_n$ for $a_1 \otimes  \cdots \otimes a_n \in A^{\otimes n}$.

For example, the product of two words of length two gives explicitly
\allowdisplaybreaks{
\begin{align*}
		\lefteqn{a_1a_2 \ash b_1b_2 = (a_1 \ash b_1b_2)a_2 + (a_1 a_2 \ash b_1) b_2 
										+ (a_1 \ash b_1)(a_2 \ast b_2)}\\
		&= b_1b_2a_1a_2 + a_1b_1b_2a_2 + b_1a_1b_2a_2 + a_1a_2b_1b_2 + b_1a_1a_2b_2 + a_1b_1a_2b_2 \\ 
		& \quad\ +  b_1(a_1\ast b_2)a_2 + (a_1\ast b_1)b_2a_2 + a_1(a_2\ast b_1)b_2 + (a_1\ast b_1)a_2b_2 \\
		& \quad\  + a_1b_1(a_2\ast b_2) +  b_1a_1(a_2\ast b_2) +  (a_1 \ast b_1)(a_2 \ast b_2). 
\end{align*}}

\medskip

Recall from Section \ref{sect:Strichartz} that the recursive description of shuffle product is complemented by its definition in terms of permutations. Similarly, the above recursive definition of the quasi-shuffle product has an explicit presentation in terms of surjections. Concretely, let $f$ be a surjective map from $[n]=\{1, \ldots ,n\}$ to $[p]$. We set:
$$
	f(a_1 \cdots a_n) := (\prod\limits_{j \in f^{-1}(1)}{^{\!\!\!\!\!\!\!\!\!^{\mathlarger{\ast}}}}\,\, a_j) \otimes  
					\cdots \otimes (\prod\limits_{j\in f^{-1}(p)}{^{\!\!\!\!\!\!\!\!\!^{\mathlarger{\ast}}}}\,\,  a_j)\in A^{\otimes p},
$$
so that for $f$ from, say, $[4]$ to $[2]$ given by $f(1)=1$, $f(2)=2$, $f(3)=1$, $f(4)=2$, we find $f(a_1\cdots a_4) = (a_1 \ast a_3) \otimes (a_2\ast a_4)$. 

Then, we obtain:
\begin{equation}
\label{def:qShSurjections}
	a_1\cdots a_n \ash b_1\cdots b_m := \sum_f f(a_1\cdots a_nb_1\cdots b_m),
\end{equation}
where $f$ runs over all surjections from the set $[n+m]$ to the set $[k]$, for $\max(n,m) \le k \le m+n$, and such that $f(1) < \cdots  < f(n)$, $f(n+1) < \cdots < f(n+m)$.

Let us write now ${\mathcal T}$ for the tensor algebra over the algebra $\mathcal S$ of semimartingales, equipped with the $\star$ product defined in \eqref{star-prod}, so that from now on $X_1 \cdots X_n$ denotes a tensor product of semimartingales in ${\mathcal S}^{\otimes n} \subset \mathcal T$, and $\int X_1 \cdots X_n$ the corresponding iterated stochastic integral. We finally obtain the analog for iterated integrals of semimartingales of the usual Chen formulas of iterated integrals. Recall that the latter hold for either Stratonovich or indefinite Riemann integrals.

\begin{proposition}
\label{prodstoc}
The product of two iterated stochastic integrals is given by:
\allowdisplaybreaks{
\begin{eqnarray*}
	\int X_1 \cdots X_n \cdot \int Y_1 \cdots Y_m	
			&=& \int (X_1 \cdots X_n\ash Y_1 \cdots Y_m)\\
			&=& \sum_f \int f(X_1 \cdots X_nY_1 \cdots Y_m),
\end{eqnarray*}}
where, as above, $f$ runs over all surjections from the set $[n+m]$ to the set $[k]$, for $\max(n,m) \le k \le m+n$, and such that $f(1) < \cdots <f(n)$, $f(n+1)< \cdots <f(n+m)$.
\end{proposition}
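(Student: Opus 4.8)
The plan is to prove the first equality
$$
\int X_1 \cdots X_n \cdot \int Y_1 \cdots Y_m = \int (X_1 \cdots X_n \ash Y_1 \cdots Y_m)
$$
by induction on the total length $n+m$; the second equality is then an immediate consequence of the surjection presentation \eqref{def:qShSurjections} of the quasi-shuffle product together with the linearity of the iterated-integral map $\int$. The base cases $n=0$ or $m=0$ are trivial, since the empty iterated integral is the unit and $\un \ash w = w \ash \un = w$ for every word $w$. The only genuine analytic input is the semimartingale product formula \eqref{qshheq}, which plays here exactly the role that ordinary integration by parts plays in Chen's classical shuffle identity \eqref{ChenCl}.

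For the inductive step, assume $n,m\ge 1$, abbreviate $F:=\int X_1 \cdots X_n$, $G:=\int Y_1 \cdots Y_m$, and set $P:=\int X_1 \cdots X_{n-1}$, $Q:=\int Y_1 \cdots Y_{m-1}$, so that $F=\int P_-\,\mathrm{d}X_n$ and $G=\int Q_-\,\mathrm{d}Y_m$. Applying \eqref{qshheq} with $A=P$, $B=X_n$, $C=Q$, $D=Y_m$ yields
$$
F\cdot G = \int (F\cdot Q)_-\,\mathrm{d}Y_m + \int (P\cdot G)_-\,\mathrm{d}X_n + \int (P\cdot Q)_-\,\mathrm{d}[X_n,Y_m].
$$
Each of the three products $F\cdot Q$, $P\cdot G$, $P\cdot Q$ has total length strictly less than $n+m$, so the inductive hypothesis gives
\begin{align*}
F\cdot Q &= \int (X_1 \cdots X_n \ash Y_1 \cdots Y_{m-1}),\\
P\cdot G &= \int (X_1 \cdots X_{n-1} \ash Y_1 \cdots Y_m),\\
P\cdot Q &= \int (X_1 \cdots X_{n-1} \ash Y_1 \cdots Y_{m-1}).
\end{align*}
Using the elementary identity $\int(\int W)_-\,\mathrm{d}Z=\int WZ$ (append the letter $Z$ to every word of $W$), valid by linearity of $\int$, and noting that $[X_n,Y_m]=X_n\star Y_m$ is a single semimartingale, hence a single letter of $\mathcal T$, the three terms become $\int (X_1 \cdots X_n \ash Y_1 \cdots Y_{m-1})Y_m$, then $\int (X_1 \cdots X_{n-1} \ash Y_1 \cdots Y_m)X_n$, and finally $\int (X_1 \cdots X_{n-1} \ash Y_1 \cdots Y_{m-1})(X_n\star Y_m)$.

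These are precisely the $\downarrow$-, $\uparrow$- and $\bullet$-summands of the recursive definition of $\ash$ on $\mathcal T$, once one recalls that the commutative product $\ast$ on the alphabet of semimartingales entering that definition is here the iterated-bracket product $\star$ of \eqref{star-prod}. Summing them therefore reconstitutes $\int(X_1 \cdots X_n \ash Y_1 \cdots Y_m)$, closing the induction. The computation is routine once this dictionary is fixed; the one point requiring care — and in fact the crux of the whole statement — is the exact matching of the bracket-correction term $\int (P\cdot Q)_-\,\mathrm{d}[X_n,Y_m]$ coming from semimartingale integration by parts with the $\bullet$-term of the quasi-shuffle recursion. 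It is exactly this term, which vanishes in the shuffle world where $\ast$ is the null product, that forces the identification $\ast=\star$ and is ultimately responsible for the appearance of surjections, rather than mere shuffles, in \eqref{def:qShSurjections} and hence in Theorem \ref{mainTH}.
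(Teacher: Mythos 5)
Your proof is correct and follows essentially the same route as the paper: the paper's argument is precisely the observation that the semimartingale integration-by-parts identity \eqref{qshheq} realizes the three half-shuffle operations $\uparrow$, $\downarrow$, $\bullet$ of the recursive tensor-algebra definition (with $\ast=\star$), which is the induction you carry out explicitly. Your write-up merely makes the induction and the base cases explicit where the paper leaves them implicit.
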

For example, the product of two iterated stochastic integrals gives
$$
	\int X_1 X_2 \cdot \int X_3=\int \big(X_1X_2X_3 + X_1X_3X_2 + X_3X_1X_2 + (X_1\star X_3)X_2+X_1(X_2 \star X_3)\big).
$$

The proposition follows from the observation that the inductive rules for the quasi-shuffle product in the tensor algebra give the pattern obeyed by products of iterated integrals of semimartingales. Namely, setting for $X:= \int A_-{\mathrm{d}}B$ and $Y:= \int C_-{\mathrm{d}}D$, we have
\allowdisplaybreaks{
\begin{eqnarray*}
	X \uparrow Y	&:=& \int\big(A(\int C_-dD)\big)_- {\mathrm{d}}B,\\  
	X \downarrow Y&:=& \int \big((\int A_-dB)C\big)_- {\mathrm{d}}D, \\
	X\bullet Y		&:=& \int (AC)_- {\mathrm{d}}[B,D].
\end{eqnarray*}}

%%%%%%%%%%%%%%%%%%%%%%%%%
%%%%%%%%%%%%%%%%%%%%%%%%%

\section{Surjections}
\label{sect:surjections}

This section presents a concise and mostly self-contained account on the modern algebraic theory of surjections, originating independently from F.~Hivert's Ph.D.-thesis \cite{FH} and from the work by Chapoton on the permutohedron \cite{Chapoton}. We refer to the works  \cite{fpt,npt} for more details on the subject.

Let us write $\Surj_{n,p}$ for the set of surjective maps from the set $[n]:=\{1,\ldots,n\}$ to the set $[p]$, $\SURJ_{n,p}$ for its linear span, $\Surj_n$ for the union of the $\Surj_{n,p}, \ p\leq n$, and 
$$
	\SURJ_n := \bigoplus\limits_{1\leq p\leq n} \SURJ_{n,p}.
$$
The linear span of all surjections is denoted
$$
	\SURJ := \bigoplus\limits_{n,p}\SURJ_{n,p},
$$ 
and will be called from now on the set of {\it{surjective functions}}. 

Let us mention for completeness sake that surjective functions in this sense are often referred to as \it word quasisymmetric functions \rm in the literature on algebraic combinatoric. This is because they can be encoded by formal sums of words over an ordered alphabet. The set $\SURJ$ is then written $\mathbf{WQSym}$ (this is the notation used in the articles we quoted for further details on the underying theory). This interpretation, which we will not use in this work, permits to deduce automatically certain properties for $\SURJ$ from general properties of words. See the references \cite{npt,fpt} for more details.

The vector space $\SURJ$ is naturally equipped with a Hopf algebra structure, through its action on quasi-shuffle algebras \cite{npt}. However, we will make use here only of the algebra structure. As we just saw, quasi-shuffle algebras are closely related to stochastic integration, and this is the reason why $\SURJ$ will prove to provide an appropriate algebraic framework for what is going to be presented in the next sections.

Let us consider a word $w$ over the integers (a sequence of integers) whose set of letters is $I=\{i_1, \ldots ,i_n\}$ (e.g. $w=35731$, $I=\{1,3,5,7\}$). Let us write $f$ for the unique increasing bijection that sends $I=\{i_1, \ldots,i_n\}$ to $\{1,\ldots,n\}$ (e.g. $f(1)=1,f(3)=2,f(5)=3,f(7)=4$). The \it packing map \rm $\pack$ is the induced map on words (for example $\pack(35731):=f(3)f(5)f(7)f(3)f(1)=23421$).

\begin{definition} 
The product in $\SURJ$ of $f \in \Surj_{n,k}$ with $g \in \Surj_{m,l}$ is defined by $f \diamond g:= \sum_h h,$ where $h$ runs over the elements in $\Surj_{n+m,i+j}$, $\max(l,k) \leq i+j \leq l+k$ such that 
$$
	\pack(h(1) \cdots h(n)) = f(1) \cdots f(n),\quad \pack(h(n+1) \cdots h(n+m))=g(1) \cdots g(m).
$$
\end{definition}

This product is associative and unital (the unit can be understood as the unique surjection from the emptyset $\emptyset =:[0]$ to itself), but it is not commutative. 

\begin{proposition}\label{wqsym}
The linear span of all surjection, $\SURJ$, equipped with the $\diamond$ product, is an associative, unital, non-commutative algebra.
\end{proposition}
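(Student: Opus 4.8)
The plan is to realize every surjection $f\in\Surj_{n,k}$ as the \emph{packed word} $f(1)\cdots f(n)$, i.e. a word over the positive integers in which every value between $1$ and its maximum occurs; thus $\Surj_n$ is exactly the set of packed words of length $n$, and $\pack$ acts as the identity on them. Unitality is then immediate: the empty word is the only word of length $0$, and since $f$ is already packed the only packed word $w$ of length $n$ with $\pack(w)=f$ is $f$ itself, whence $\un\diamond f=f\diamond\un=f$. Non-commutativity is settled by a single explicit computation, e.g. comparing $12\diamond 1$ with $1\diamond 12$, which already differ in their length-$3$ terms.

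The heart of the matter is associativity, which I would prove by showing that both bracketings equal one and the same ``triple product''. Concretely, for $f,g,e$ of lengths $n,m,p$ I claim
$$
(f\diamond g)\diamond e \;=\; \sum_w w \;=\; f\diamond(g\diamond e),
$$
where $w$ runs over all packed words of length $n+m+p$ whose three consecutive blocks of lengths $n,m,p$ pack to $f$, $g$ and $e$ respectively. The key technical ingredient is the compatibility of $\pack$ with restriction to a block of positions: for any word $w$ and any subset $S$ of its positions,
$$
\pack(w|_S)=\pack\bigl(\pack(w)|_S\bigr).
$$
This holds because $\pack$ depends only on the relative order (the pattern) of the letters, and replacing each letter of $w$ by its rank among the distinct values preserves all the comparisons $w_i<w_j$, $w_i=w_j$, $w_i>w_j$, hence preserves the pattern of every restriction.

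Granting this lemma, I would expand $(f\diamond g)\diamond e=\sum_u\sum_w w$, where $u$ ranges over $f\diamond g$ and, for each $u$, $w$ ranges over the packed words of length $n+m+p$ with $\pack(w|_{[1,n+m]})=u$ and $\pack(w|_{[n+m+1,n+m+p]})=e$. Applying the lemma to the block $[1,n]\subseteq[1,n+m]$ shows that the first two blocks of such a $w$ pack to $f$ and $g$; conversely, for any packed $w$ whose three blocks pack to $f,g,e$ the intermediate term $u:=\pack(w|_{[1,n+m]})$ is forced, lies in $f\diamond g$ by the lemma, and yields $w$ in $u\diamond e$. Thus each admissible $w$ occurs with multiplicity exactly one, giving the displayed triple product. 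The symmetric argument, slicing off the last block first and using $v:=\pack(w|_{[n+1,n+m+p]})$, shows $f\diamond(g\diamond e)$ equals the same sum.

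I expect the main obstacle to be precisely this multiplicity-one bookkeeping: one must check that the intermediate surjection is uniquely reconstructible from $w$ (so that no cancellation or overcounting occurs) and that the range constraint $\max(k,l)\le i+j\le k+l$ on the number of values is automatically enforced by the two packing conditions rather than being an independent restriction. Once the restriction lemma is in hand these become routine verifications, but they are where the care is needed.
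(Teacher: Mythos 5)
Your proof is correct and follows essentially the same route as the paper: the paper likewise establishes associativity by observing that both bracketings equal the sum of all surjections $h$ of length $n+m+p$ whose three consecutive blocks pack to $f$, $g$ and $e$. You simply make explicit the restriction lemma $\pack(w|_S)=\pack(\pack(w)|_S)$ and the multiplicity-one bookkeeping that the paper leaves implicit.
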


Associativity follows by noticing that the product $f \diamond g \diamond j$ of three surjections, where $j \in \Surj_{p,q}$ is obtained as the sum of all surjections $h$ with $pack(h(1) \cdots h(n))=f(1) \cdots f(n)$, $pack(h(n+1) \cdots h(n+m))=g(1) \cdots g(m)$, $pack(h(n+m+1) \cdots h(n+m+p))=j(1) \cdots j(p)$.

%%%%%%%%%%%%%%%%%%%%%%%%%
%%%%%%%%%%%%%%%%%%%%%%%%%

\section{Descents and a It\^o-type BCH formula}
\label{IBCH}

It turns out that, similar to the classical theory of iterated integrals, the most interesting computations that will take place later in this article on iterated stochastic integrals do not involve the full algebra $\SURJ$, but only a small subalgebra, known as the {\it{descent algebra}} or {\it{algebra of noncommutative symmetric functions}} $\NCSF$. For more details the reader is refereed to the standard references \cite{NCSF,Reutenauer}.

As an algebra, $\NCSF$ is the free graded associative unital algebra over generators $1_n$, indexed by non-negative integers. We denote the product $\ast$ and set $1_{n,m}:=1_n \ast 1_m$. In general, for a sequence $\overline n:=n_1, \ldots ,n_k$ of integers, we write $1_{\overline n}:=1_{n_1} \ast \cdots \ast 1_{n_k}$. As a vector space, $\NCSF$ is simply the linear span of the $1_{\overline n}$.

A surjection $f$ in $\Surj_{n}$ is said to have a descent in position $i<n$ if and only if $f(i) \geq f(i+1)$. The set of all descents of $f$ is written $\mathrm{Desc}(f)$ and
$$
	\mathrm{Desc}(f) := \{i<n,\ f(i) \geq f(i+1)\}.
$$
The number $d(f)$ that appears in Theorem \ref{mainTH} is the number of descents of $f$.

We also set, for $I \subset [n-1]$, 
\allowdisplaybreaks{
\begin{eqnarray*}
	\mathrm{Desc}_I^n	&:=& \{f\in \Surj_n,\ \mathrm{Desc}(f)=I\},\\
	D_I^n	&:=& \sum\limits_{f\in \mathrm{Desc}_I}f\in \Surj ,
\end{eqnarray*}}
and
\allowdisplaybreaks{
\begin{eqnarray*}
	\mathrm{Desc}_{\subseteq I}^n	&:=& \{f\in \Surj_n,\ \mathrm{Desc}(f)\subseteq I\},\\
	D_{\subseteq I}^n		&:=& \sum\limits_{f\in \mathrm{Desc}_{\subseteq I}}f\in \Surj .
\end{eqnarray*}}
When the value of $n$ is obvious from the context, we will abbreviate $D_I^n$ by $D_I$, and similarly for other symbols. Notice that $\mathrm{Desc}_{\subseteq I}=\sum_{J\subseteq I}\mathrm{Desc}_J$, so that, by M\"obius inversion in the poset of subsets of the set $[n-1]$, 
\begin{equation}
\label{moebius}
	\mathrm{Desc}_{I}=\sum\limits_{J\subseteq I}(-1)^{|I|-|J|}\mathrm{Desc}_{\subseteq I}.
\end{equation}
The subsets $\mathrm{Desc}_I$ form a decomposition of $\Surj_n$ into a family of disjoint subsets, from which it follows that $D_I$ and (by a triangularity argument) $D_{\subseteq I}$ form two linearly independent families in $\SURJ_n$.

\begin{lemma}
The map $\iota$ from $\NCSF$ to $\SURJ$ defined by:
$$
	\iota(1_{\overline n}):=D_{\subseteq \{n_1,n_1+n_2,\ldots , n_1+ \cdots +n_{k-1}\}}
$$
is an injective algebra map from $\NCSF$ into $\SURJ$.
\end{lemma}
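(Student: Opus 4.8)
The statement has two parts: that $\iota$ is an algebra map, and that it is injective. These are largely independent, so I would treat them separately. The cleaner starting point is injectivity, since it follows almost immediately from the structural remarks already assembled just before the lemma.

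\textbf{Injectivity.} As a vector space $\NCSF$ has the compositions $\overline n = (n_1,\ldots,n_k)$ of each integer $n=n_1+\cdots+n_k$ as a basis, via the elements $1_{\overline n}$. The map $\overline n \mapsto \{n_1,\,n_1+n_2,\,\ldots,\,n_1+\cdots+n_{k-1}\}$ is the standard bijection between compositions of $n$ and subsets of $[n-1]$. Under $\iota$, the basis element $1_{\overline n}$ is sent to $D_{\subseteq I}^n$ where $I$ is the corresponding subset, and since $\iota$ respects the grading by $n$, it suffices to check injectivity in each homogeneous component $\SURJ_n$. The excerpt states explicitly that the family $\{D_{\subseteq I}^n : I \subseteq [n-1]\}$ is linearly independent in $\SURJ_n$ (by the triangularity argument relating it to the disjoint family $\{D_I^n\}$ via \eqref{moebius}). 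So $\iota$ carries a basis of the degree-$n$ part of $\NCSF$ to a linearly independent set, and injectivity is immediate.

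\textbf{Algebra map.} This is the substantive part: I must show $\iota(1_{\overline n} \ast 1_{\overline m}) = \iota(1_{\overline n}) \diamond \iota(1_{\overline m})$. Since $1_{\overline n}\ast 1_{\overline m} = 1_{(n_1,\ldots,n_k,m_1,\ldots,m_l)}$ is just concatenation of compositions, the left-hand side is $D_{\subseteq I'}^{n+m}$ for the subset $I'$ of $[n+m-1]$ obtained by placing the $\NCSF$-descents of $\overline n$, then a forced descent at position $n$, then the shifted descents of $\overline m$ — concretely $I' = I_{\overline n} \cup \{n\} \cup (n + I_{\overline m})$ where $n=n_1+\cdots+n_k$. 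The plan is to compute the right-hand side $D_{\subseteq I_{\overline n}}^{n}\diamond D_{\subseteq I_{\overline m}}^{m}$ directly from the definition of $\diamond$: it is the sum of all surjections $h\in\Surj_{n+m}$ whose left block $h(1)\cdots h(n)$ packs to an element with descent set contained in $I_{\overline n}$ and whose right block $h(n+1)\cdots h(n+m)$ packs to an element with descent set contained in $I_{\overline m}$. I would then argue that packing preserves descent positions within each block (the increasing bijection $\pack$ is order-preserving, so $h(i)\ge h(i+1)$ iff its packing has a descent at $i$), whence the condition on $h$ is exactly that $\mathrm{Desc}(h)\cap[n-1]\subseteq I_{\overline n}$ and $\mathrm{Desc}(h)\cap\{n+1,\ldots,n+m-1\}\subseteq n+I_{\overline m}$, with no constraint at position $n$. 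Summing over all such $h$ gives precisely $D_{\subseteq I'}^{n+m}$, matching the left-hand side.

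\textbf{Main obstacle.} The one point requiring genuine care is the bookkeeping at the junction position $n$: the $\diamond$ product imposes no relation between the last letter of the left block and the first letter of the right block, which is exactly why $n$ lies in $I'$ (the forced descent from the $\ast$-concatenation) yet imposes no descent constraint on $h$. I would make this precise by checking that ``descent set of $h$ contained in $I'$'' decomposes as the two independent block-conditions together with an unconstrained position $n$, so that the two descriptions of the summation range over $h$ coincide. A secondary subtlety is confirming that the range of block-sizes $\max(k',l')\le i+j\le k'+l'$ in the definition of $\diamond$ (the number of distinct values) is automatically accommodated, i.e. that every surjection $h$ satisfying the packing/descent conditions is genuinely counted once; this follows because $h$ is determined by its values and the packing conditions are conditions on $h$ itself, so no double counting or omission occurs. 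Once the junction analysis is settled, the identification $\iota(1_{\overline n}\ast 1_{\overline m})=\iota(1_{\overline n})\diamond\iota(1_{\overline m})$ is complete, and since $\iota$ clearly sends the unit of $\NCSF$ to the empty surjection, it is a unital algebra homomorphism.
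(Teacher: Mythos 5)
Your proposal is correct and follows essentially the same route as the paper: injectivity from the linear independence of the $D_{\subseteq I}^n$, and the algebra-map property by identifying $\iota(1_{\overline n})\diamond\iota(1_{\overline m})$ as the sum of all surjections whose two packed blocks satisfy the respective descent conditions, with the key observation that no constraint is imposed at the junction position $n$ (which is exactly the forced descent coming from concatenating the compositions). Your extra remarks — that packing preserves descent positions and that each $h$ is counted exactly once — only make explicit what the paper leaves implicit.
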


\begin{proof}
Injectivity follows immediately from the linear independency of the $D_{\subseteq I}$. Let us show that $\iota$ is an algebra map. For arbitrary $\overline n = n_1, \ldots ,n_k$, $\overline m = m_1, \ldots ,m_l$, we have
$$
	\iota(1_{\overline n}\ast 1_{\overline m})
	=D_{\subseteq \{n_1,n_1+n_2,\ldots ,n_1+ \cdots +n_k,n_1+ \cdots + n_k+m_1, 
	\ldots ,n_1+ \cdots + n_k + m_1 +  \cdots +m_{l-1}\}}.
$$
On the other hand, $\iota(1_{\overline n}) \diamond \iota(1_{\overline m})$ is, by definition of the $\diamond$ product in $\SURJ$, the sum of all surjections $f \in \Surj_{n_1+ \cdots +n_k+m_1+ \cdots +m_l}$ such that $\pack(f(1) \cdots f(n_1+ \cdots +n_k))$ lies in 
$$
			\mathrm{Desc}_{\subseteq \{n_1,n_1+n_2, \ldots ,n_1+ \cdots +n_{k-1}\}}
$$ 
and $\pack(f(n_1+ \cdots +n_k+1) \cdots f(n_1+ \cdots +n_k+m_1+ \cdots +m_l))$ lies in
$$
	\mathrm{Desc}_{\subseteq \{m_1,m_1+m_2, \ldots , m_1+ \cdots +m_{l-1}\}}.
$$ 
Since there is no constraint on the relative values of $f(n_1+ \cdots +n_k)$ and $f(n_1+ \cdots +n_k+1)$, the statement of the lemma follows.
\end{proof}

The following theorem is the equivalent, in the quasi-shuffle framework, of the classical continuous Baker--Campbell--Hausdorff theorem, which computes, among others, the logarithm of the solution of a -- matrix-valued -- linear differential equation. Theorem \ref{mainth} will appear to play the same role for matrix stochastic linear differential equations. It was first stated in \cite{npt}, but the proof given in that article is indirect and relies on structure arguments from the theory of noncommutative symmetric functions. Stating those results, which are scattered in the literature on algebraic combinatorics, would go beyond the scope of this work. We propose therefore a simple and self-contained proof, which is reminiscent of the solution to the classical Baker--Campbell--Hausdorff problem stated in \cite{Reutenauer}.

Let us set $I:=\sum_{n=0}^\infty \iota(1_n)=:\sum_{n=0}^\infty p_n$, where we write $p_n$ for the identity map of the set $[n]$ viewed as an element of $\Surj_n$.

\begin{theorem}\label{mainth}
We have, in $\SURJ$,
\allowdisplaybreaks{
\begin{eqnarray*}
	\log(I) &=& \sum\limits_{n=1}^\infty \sum\limits_{I\subseteq [n-1]}\frac{(-1)^{|I|}}{|I|+1}\cdot D_{\subseteq I}^n \\
		 &=& \sum\limits_{n=1}^\infty \sum\limits_{I\subseteq [n-1]}\frac{(-1)^{|I|}}{n}\cdot {n-1\choose |I|}^{-1} D_{ I}^n.
\end{eqnarray*}}
\end{theorem}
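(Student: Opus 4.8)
The plan is to work in the graded completion $\widehat{\SURJ}=\prod_{n\ge 0}\SURJ_n$ and to reduce the whole computation to one elementary product rule for the identity surjections $p_n$, supplied by the preceding Lemma. Since $p_n=\iota(1_n)=D_{\subseteq\emptyset}^n$ (a surjection with empty descent set is strictly increasing, hence equal to the identity of $[n]$) and $\iota$ is an algebra morphism, I obtain for all $a_1,\ldots,a_m\ge 1$
\begin{equation*}
  p_{a_1}\diamond\cdots\diamond p_{a_m}=\iota(1_{a_1}\ast\cdots\ast 1_{a_m})=\iota(1_{\overline a})=D_{\subseteq S}^n,
\end{equation*}
where $\overline a=(a_1,\ldots,a_m)$, $n=a_1+\cdots+a_m$, and $S=\{a_1,a_1+a_2,\ldots,a_1+\cdots+a_{m-1}\}\subseteq[n-1]$. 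This is the only structural input I need from the material developed so far.

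Next I would write $I=p_0+x$ with $x:=\sum_{n\ge 1}p_n$. Since $x$ has vanishing degree-zero component, $\log(I)=\sum_{m\ge 1}\frac{(-1)^{m-1}}{m}x^{\diamond m}$ is well defined in $\widehat{\SURJ}$, because in each fixed degree $n$ only the finitely many terms with $m\le n$ contribute. Expanding the $m$-fold $\diamond$-power $x^{\diamond m}=\sum_{a_1,\ldots,a_m\ge 1}p_{a_1}\diamond\cdots\diamond p_{a_m}$ and feeding in the displayed product rule, together with the standard bijection between compositions of $n$ into $m$ parts and subsets $S\subseteq[n-1]$ with $|S|=m-1$, gives
\begin{equation*}
  x^{\diamond m}=\sum_{n\ge m}\ \sum_{\substack{S\subseteq[n-1]\\ |S|=m-1}}D_{\subseteq S}^n.
\end{equation*}
Substituting this and reorganising the sum by $n$ and $S$ (so that $m=|S|+1$) yields the first claimed identity $\log(I)=\sum_{n\ge 1}\sum_{I\subseteq[n-1]}\frac{(-1)^{|I|}}{|I|+1}D_{\subseteq I}^n$ at once.

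For the second equality I would convert the $D_{\subseteq I}$ into the $D_I$ using the disjoint decomposition $\mathrm{Desc}_{\subseteq I}^n=\bigsqcup_{J\subseteq I}\mathrm{Desc}_J^n$, i.e.\ $D_{\subseteq I}^n=\sum_{J\subseteq I}D_J^n$, and then swap the order of summation so as to read off the coefficient of each $D_J^n$ as $\sum_{J\subseteq I\subseteq[n-1]}\frac{(-1)^{|I|}}{|I|+1}$. Setting $j=|J|$ and reparametrising $I$ by $K=I\setminus J\subseteq[n-1]\setminus J$ turns this into the finite alternating sum $(-1)^j\sum_{i=0}^{n-1-j}\binom{n-1-j}{i}\frac{(-1)^i}{j+i+1}$, and the Beta-integral identity $\sum_{i=0}^{r}\binom{r}{i}\frac{(-1)^i}{x+i}=\frac{r!}{x(x+1)\cdots(x+r)}$ (with $x=j+1$, $r=n-1-j$) evaluates it to $(-1)^j\frac{j!\,(n-1-j)!}{n!}=\frac{(-1)^j}{n}\binom{n-1}{j}^{-1}$, which is exactly the asserted coefficient.

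The conceptual content sits entirely in the first two paragraphs: the point is that the product rule for the $p_n$ collapses each noncommutative $\diamond$-power into a single basis element $D_{\subseteq S}$, thereby linearising $\log(1+x)$ completely. The third paragraph is a routine but indispensable binomial computation. The only genuine obstacle I anticipate is bookkeeping — keeping the passage between compositions, subsets of $[n-1]$, and the index pairs $(n,m)$ consistent, and checking that the completion makes every infinite sum converge degree by degree — rather than any conceptual difficulty.
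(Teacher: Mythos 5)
Your proposal is correct and follows essentially the same route as the paper: the paper computes $\log\bigl(\sum_n 1_n\bigr)$ in $\NCSF$ and pushes the result through the algebra morphism $\iota$ (which is exactly your collapse of $p_{a_1}\diamond\cdots\diamond p_{a_m}$ into $D_{\subseteq S}^n$ via the composition--subset bijection), and it then obtains the second equality by the very same resummation over $J\subseteq I$ and the same Beta-integral evaluation $(-1)^k\int_0^1(1-x)^{n-k-1}x^k\,dx=\frac{(-1)^k}{n}\binom{n-1}{k}^{-1}$. The only cosmetic difference is that you inline the morphism and work directly in the completed algebra $\widehat{\SURJ}$ rather than first writing the logarithm as a sum over compositions $\overline n$ in $\NCSF$.
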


\begin{proof}
The first part of the statement follows from the computation of the logarithm of $\sum_{n=0}^\infty 1_n$ in $\Sym$, and from the previous Lemma. Indeed
$$
	\log\big(\sum\limits_{n=0}^\infty 1_n\big)
		=\sum\limits_{i=1}^\infty \frac{(-1)^{n-1}}{n}\big(\sum\limits_{i=1}^\infty 1_i\big)^n
		=\sum\limits_{{\overline n}=i_1,\ldots,i_k}\frac{(-1)^{k-1}}{k}\cdot 1_{\overline n}.
$$

Let us now expand the second term of the identity in the theorem in terms of the $D_{ I}^n$. This yields the coefficient of $D_I^n$, $I=\{i_1, \ldots,i_k\}$ given by:
\begin{eqnarray*}
	\sum\limits_{I\subseteq J\subset [n-1]}\frac{(-1)^{|J|}}{|J|+1}&=&\sum\limits_{j=0}^{n-k-1}\frac{(-1)^{j+k}}{j+k+1}{n-k-1\choose j}\\
	&=&(-1)^k\sum\limits_{j=0}^{n-k-1}\int\limits_0^1(-1)^j{n-k-1\choose j}x^{k+j}dx\\
	&=&(-1)^{k}\int\limits_0^1(1-x)^{n-k-1}x^kdx=(-1)^k\frac{1}{n}{n-1\choose k}^{-1},
\end{eqnarray*}
from which the theorem follows.
\end{proof}

In view of Proposition \ref{prodstoc}, Theorem \ref{mainTH} also follows.

The elements  $\sum_{I\subseteq [n-1]}\frac{(-1)^{|I|}}{n}\cdot {n-1\choose |S|}^{-1} D_{ I}^n$ are  analogs in $\SURJ$ of the celebrated Solomon idempotents, see \cite{npt} for further details.

%%%%%%%%%%%%%%%%%%%%%%%%%
%%%%%%%%%%%%%%%%%%%%%%%%%

\section{Noncommutative stochastic calculus}
\label{sect:ncStochCalc}

In the previous sections of the article, we investigated quasi-shuffle-type properties of iterated integrals of semimartingales. We also showed the strong relationship between them and properties of surjections. Now Proposition \ref{prodstoc} is restated and generalized:

\begin{proposition}\label{prodrule}
The product of $k$ iterated stochastic integrals of semimartingales is given by:
\begin{align*}
	&\Big(\int X_1^1 \cdots X_{n_1}^1\Big)\cdot\ \cdots\ \cdot \Big(\int X_1^k  \cdots X_{n_k}^k\Big)\\
	&\qquad = \sum_{f\in {\mathrm{Desc}}_{\subseteq\{n_1, \ldots ,n_1+ \cdots +n_{k-1}\}}} 
	\int f\Big(X_1^1 \cdots X_{n_1}^1 \cdots X_1^k \cdots X_{n_k}^k\Big).
\end{align*}
More generally, for $f_1 \in \Surj_{n_1}, \ldots ,f_k \in \Surj_{n_k}$:
\begin{align*}
	&f_1\Big(\int X_1^1 \cdots X_{n_1}^1\Big)\cdot\ \dots\ \cdot f_k\Big(\int X_1^k \cdots X_{n_k}^k\Big)\\
	&\qquad = (f_1\diamond  \cdots \diamond f_k)\Big(\int X_1^1 \cdots X_{n_1}^1 \cdots X_1^k \cdots X_{n_k}^k\Big).
\end{align*}
\end{proposition}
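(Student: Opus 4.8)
The statement has two parts, and the second (general) formula clearly implies the first: taking each $f_i$ to be the identity surjection $p_{n_i}$ on $[n_i]$, the product $p_{n_1}\diamond\cdots\diamond p_{n_k}$ is by definition of the $\diamond$ product the sum of all surjections $f$ whose packed restrictions to the successive blocks are identities, which is precisely $\sum_{f\in\mathrm{Desc}_{\subseteq\{n_1,\ldots,n_1+\cdots+n_{k-1}\}}}f$ (an identity on a block contributes no descent within that block, while descents are unconstrained at the block boundaries). So I would first reduce to proving the general claim and spend the bulk of the argument there.

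For the general claim I would proceed by induction on $k$, the number of factors. The base case $k=1$ is vacuous, and the essential content is already the case $k=2$. Here the plan is to show that the combinatorial action of surjections is compatible with the product structures on both sides: on the left we have the product of iterated stochastic integrals, governed by the quasi-shuffle product via Proposition~\ref{prodstoc}, and on the right we have the $\diamond$ product of surjections in $\SURJ$. The key is that $\SURJ$ acts on the quasi-shuffle algebra $\mathcal T$ of iterated integrals, and that this action intertwines $\diamond$ with the product $\bullet$ of iterated integrals; that is, $(f_1\diamond f_2)(w) = f_1(w_1)\cdot f_2(w_2)$ when $w=w_1 w_2$ is the concatenation of the two input words. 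Concretely, I would unwind both sides: $f_1(\int X_1^1\cdots X_{n_1}^1)\cdot f_2(\int X_1^2\cdots X_{n_2}^2)$ is, by the definition of the surjection action (equation~\eqref{def:qShSurjections}) together with Proposition~\ref{prodstoc}, a sum over surjections $g$ appearing in the quasi-shuffle of the two already-bracketed words, while $(f_1\diamond f_2)(X_1^1\cdots X_{n_1}^1 X_1^2\cdots X_{n_2}^2)$ is a sum over surjections $h$ whose packed block-restrictions equal $f_1$ and $f_2$. The goal is a bijection between these two indexing sets that matches the resulting iterated integrals.

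For the inductive step I would peel off the last factor, writing $f_1\diamond\cdots\diamond f_k = (f_1\diamond\cdots\diamond f_{k-1})\diamond f_k$ and using associativity of $\diamond$ (Proposition~\ref{wqsym}), combined with the $k=2$ case and the inductive hypothesis applied to the first $k-1$ factors; the description of the threefold product $f\diamond g\diamond j$ given just after Proposition~\ref{wqsym} guarantees that iterating the pairwise rule yields exactly the surjections $h$ whose successive packed block-restrictions are $f_1,\ldots,f_k$, so the indexing sets agree on the nose.

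\textbf{Main obstacle.} The crux, and the step I expect to be most delicate, is the $k=2$ compatibility: verifying that the action of $\SURJ$ on $\mathcal T$ really does intertwine the $\diamond$ product with the integral product, i.e.\ that $(f_1\diamond f_2)(w_1 w_2)=f_1(w_1)\cdot f_2(w_2)$. This is precisely the assertion that $\SURJ$ acts on quasi-shuffle algebras and that $\mathcal T$ is such an algebra under the iterated-integral product, a fact referenced via \cite{npt}. To keep the argument self-contained I would instead track the bookkeeping directly: a surjection $h\in\Surj_{n_1+n_2,p}$ groups the letters into $p$ blocks, each block producing an iterated bracket of the corresponding semimartingales, and the $p$ blocks are integrated in the order dictated by $h$; the condition that $h$ packs to $f_1$ on the first $n_1$ letters and to $f_2$ on the last $n_2$ is exactly the condition that the bracket-and-integrate pattern refines the patterns already imposed by $f_1$ and $f_2$ while quasi-shuffling the two groups of blocks together. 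Matching this against the recursive half-shuffle rules $\uparrow,\downarrow,\bullet$ for iterated integrals displayed after Proposition~\ref{prodstoc} — where $\bullet$ corresponds to merging a block from the first word with one from the second via a bracket — is where the real care is needed, since one must check that every bracket-merge arising from the $\bullet$ product is accounted for by exactly one surjection $h$ with the prescribed packings, and conversely.
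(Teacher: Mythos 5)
Your proposal is correct and follows essentially the same route as the paper: the paper likewise derives the first formula from the second (taking each $f_i$ to be the identity $p_{n_i}$, whose $\diamond$-product is $D^{n_1+\cdots+n_k}_{\subseteq\{n_1,\ldots,n_1+\cdots+n_{k-1}\}}$), establishes the $k=2$ case from the definition of the surjection action together with Proposition~\ref{prodstoc}, and concludes by induction on $k$ using associativity of $\diamond$. The bookkeeping you flag as the main obstacle is precisely the step the paper leaves to the reader, and your plan for verifying it is sound.
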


In the last formula, from which the first follows, it is implicitly assumed that the action of $\Surj_k$ on $\int X_1 \cdots X_k$ is extended linearly to the linear span of $\Surj_k$, that is, the action of a linear combination of surjections $f_i$ is the linear combination of the actions of the $f_i$. 

We let the reader check that the $k=2$ case of the last formula follows from the definition of the $f\Big(\int X_1^1 \cdots X_{n}^1\Big)$ and from Proposition \ref{prodstoc}. The general case follows by induction.

For example, the expansion of the triple product $(\int X) \cdot (\int Y_1Y_2) \cdot (\int Z_1Z_2)$ includes terms such as:
$$
	\int XY_1(Y_2 \star Z_1)Z_2,\ \int (X \star Y_1)(Y_2 \star Z_1)Z_2\ {\rm{and}}\ \int Z_1(X\star Y_1\star Z_2)Y_2.
$$	

The purpose of the present section is to extend this picture to the operator setting, that is, to iterated stochastic integrals of, say, $n \times n$ square matrices $M=M(X^{i,j})_{1\leq i,j\leq n}$ whose entries $M^{i,j}=X^{i,j}$ are semimartingales. Note that we write the indices of the entries as exponents for notational convenience in forthcoming computations. The set of such matrices is denoted $\mathfrak M$.

For matrices $M_1, \ldots ,M_k \in \mathfrak M$, we set:
$$
	\int M_1 \cdots M_k := \Big(\sum\limits_{i_1,\ldots,i_k} \int M_1^{i,i_1}M_2^{i_1,i_2}
	\cdots M_k^{i_k,j}\Big)^{i,j}_{1\leq i,j\leq n},
$$
and for $f \in \Surj_{k,l}$,
$$
	f\Big(\int M_1\cdots M_k\Big):=\bigg(\sum\limits_{i_1,\ldots ,i_k}\int 
	f\Big(M_1^{i,i_1}M_2^{i_1,i_2}\cdots M_k^{i_k,j}\Big)\bigg)^{i,j}_{1\leq i,j\leq n}.
$$
Last, if $F=\sum_i \lambda_i f_i$ is a linear combination of surjections in $\SURJ_k$, then we set:
$$
	F\Big(\int M_1\cdots M_k\Big) := \sum_i \lambda_i f_i\Big(\int M_1\cdots M_k\Big).
$$

The product rule of Proposition \ref{prodrule} applies (entry-wise), and we obtain, for $M_1, \ldots ,M_k,N_1, \ldots ,N_m \in \mathfrak M$:
\begin{align*}
	&\int M_1 \cdots M_{k} \cdot \int N_1 \cdots N_m\\
	&\quad = \Big(\sum_{f \in {\mathrm{Desc}}_{\subseteq\{k\}}} \sum \limits_{i_1, \ldots ,i_{k+m}}
		\int f\big(M_1^{i,i_1} \cdots M_{k}^{i_k,i_{k+1}}N_1^{i_{k+1},i_{k+2}} 
		\cdots N_m^{i_{k+m},i_j}\big)\Big)^{i,j}_{1\leq i,j\leq n}\\
 	&\quad = \sum_{f \in {\mathrm{Desc}}_{\subseteq\{k\}}}f\Big(\int M_1 \cdots M_{k} N_1 \cdots N_m\Big).
\end{align*}
 
For example, let $k=2,m=1$, and consider $2 \times 2$ matrices, this gives for the first entry of the product:
\begin{align*}
	\Big(\int M_1M_2 \cdot \int N_1\Big)^{1,1}
	 &= \sum_{i,j\leq 2}\int \Big(M_1^{1,i}M_2^{i,j}N_1^{j,1}+M_1^{1,i}N_1^{j,1}M_2^{i,j}+N_1^{j,1}M_1^{1,i}M_2^{i,j}\\
	&\quad\quad + M_1^{1,i}(M_2^{i,j} \star N_1^{j,1})+(M_1^{1,i} \star N_1^{j,1})M_2^{i,j}\Big).
\end{align*}	

For higher products we obtain similarly:

\begin{proposition}
For $M_1^1, \ldots, M_{n_1}^1,  M_1^k,  \ldots , M_{n_k}^k \in \mathfrak M$, we have:
\begin{align*}
	&\Big(\int M_1^1 \cdots M_{n_1}^1\Big)\cdot\  \cdots\ \cdot \Big(\int M_1^k \cdots M_{n_k}^k\Big)\\
	&\quad = \sum_{f \in {\mathrm{Desc}}_{\subseteq\{n_1, \ldots ,n_1+ \cdots +n_{k-1}\}}} 
	f\Big(\int M_1^1 \cdots M_{n_1}^1\cdots M_1^k\cdots M_{n_k}^k\Big)
\end{align*}
and more generally, for $f_1 \in \Surj_{n_1}, \ldots ,f_k \in \Surj_{n_k}$ we obtain:
\begin{align*}
	&f_1\Big(\int M_1^1\cdots M_{n_1}^1\Big)\cdot\ \dots\ \cdot f_k\Big(\int M_1^k\cdots M_{n_k}^k\Big)\\
	&\quad = (f_1\diamond \cdots \diamond f_k)\Big(\int M_1^1\cdots M_{n_1}^1\cdots M_1^k\cdots M_{n_k}^k\Big).
\end{align*}
\end{proposition}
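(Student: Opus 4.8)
The plan is to reduce the whole statement to the scalar product rule of Proposition~\ref{prodrule} by working one matrix entry at a time. Since the first (simpler) identity is the special case $f_a=p_{n_a}$ of the second — recall that the identity surjection $p_n$ equals $D_{\subseteq\emptyset}^n$, and that the injective algebra map $\iota$ introduced above gives $p_{n_1}\diamond\cdots\diamond p_{n_k}=\iota(1_{n_1}\ast\cdots\ast 1_{n_k})=D_{\subseteq\{n_1,\ldots,n_1+\cdots+n_{k-1}\}}$ — I would concentrate on establishing the second identity.

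First I would fix indices $i,j$ and compute the $(i,j)$ entry of the left-hand side. Writing $\ell_0:=i$ and $\ell_k:=j$, iterated matrix multiplication gives $\sum_{\ell_1,\ldots,\ell_{k-1}}\prod_{a=1}^{k}\big(f_a(\int M_1^a\cdots M_{n_a}^a)\big)^{\ell_{a-1},\ell_a}$, and by the definition of the matrix action each factor $\big(f_a(\int M_1^a\cdots M_{n_a}^a)\big)^{\ell_{a-1},\ell_a}$ is itself a sum, over the internal indices of block $a$, of scalar iterated integrals $\int f_a(W_a)$, where $W_a$ is the chained word of scalar matrix entries running from row index $\ell_{a-1}$ to column index $\ell_a$. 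The essential bookkeeping observation is that the index-chaining inside each block, glued across blocks by the intermediate indices $\ell_1,\ldots,\ell_{k-1}$, assembles $W_1\cdots W_k$ into exactly the long chained word whose entries are summed in the matrix-integral definition of $\int M_1^1\cdots M_{n_1}^1\cdots M_1^k\cdots M_{n_k}^k$.

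Next I would invoke the scalar Proposition~\ref{prodrule} in its full $k$-fold form, applied for each fixed value of the intermediate indices: the product $f_1(\int W_1)\cdots f_k(\int W_k)$ of scalar iterated integrals equals $(f_1\diamond\cdots\diamond f_k)(\int W_1\cdots W_k)$. Summing over all of the internal and intermediate indices and reading the result back through the definition of the matrix action produces precisely the $(i,j)$ entry of $(f_1\diamond\cdots\diamond f_k)\big(\int M_1^1\cdots M_{n_1}^1\cdots M_1^k\cdots M_{n_k}^k\big)$, which is the desired identity. One could equally well argue by induction on $k$, using associativity of both matrix multiplication and of the $\diamond$ product to reduce to the two-factor case.

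The step requiring the most care is the compatibility of index-chaining with the iterated brackets. When a surjection occurring in the scalar expansion merges positions drawn from different blocks, it produces a $\star$-bracket of the corresponding scalar entries, such as $[M_a^{p,q},M_b^{r,s}]$, and one must check that the row and column labels carried by these entries are exactly those dictated by the chaining, so that after summation over the internal indices the bracket terms reproduce the $(i,j)$ entry prescribed by the matrix action. This is the phenomenon already visible in the displayed $k=2,m=1$ computation, where the bracket terms $M_1^{1,i}(M_2^{i,j}\star N_1^{j,1})$ and $(M_1^{1,i}\star N_1^{j,1})M_2^{i,j}$ inherit their indices from the chained word $M_1^{1,i}M_2^{i,j}N_1^{j,1}$; the general case is the same verification carried out across arbitrarily many blocks.
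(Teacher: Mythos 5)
Your proposal is correct and follows the same route as the paper, which proves this proposition simply by expanding the matrix products entry-wise and invoking the scalar product rule of Proposition~\ref{prodrule}; your additional observations (that the first identity is the case $f_a=p_{n_a}$ via $\iota$, and the care taken with index-chaining across blocks) are sound elaborations of that same argument.
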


The proposition follows from the linear case (Proposition \ref{prodrule}) by expanding entry-wise the products of matrices.

We are now in the position to calculate the logarithm of the It\^o-Taylor series.

\begin{theorem}
\label{thm:nc-Log-Ito}
For an arbitrary matrix $M \in \mathfrak M$ and $X =\sum_{n=0}^\infty \int M^n$ the (formal) solution of the stochastic differential equation ${\mathrm{d}}X = X_-{\mathrm{d}}M$, $X_0:=Id$, we have:
$$
	\log(X)=\sum\limits_{n=1}^\infty \sum\limits_{I \subseteq [n-1]}\frac{(-1)^{|I|}}{n}\cdot {n-1\choose |I|}^{-1} 
	D_{ I}^n\int M^n.
$$
\end{theorem}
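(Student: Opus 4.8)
The plan is to recognize the right-hand side as the image, under a single algebra homomorphism, of the element $\log(I)$ already computed in Theorem~\ref{mainth}, thereby reducing the statement to that theorem combined with the matrix product rule of the immediately preceding Proposition. First I would introduce the evaluation map $\ev_M$ sending a surjection $f\in\Surj_n$ to the matrix $f\big(\int M^n\big)$, and extending it linearly to all of $\SURJ$. The matrix product rule, specialized to the case where every argument equals the same matrix $M$, asserts that
\[
	f_1\Big(\int M^{n_1}\Big)\cdots f_k\Big(\int M^{n_k}\Big)
	=(f_1\diamond\cdots\diamond f_k)\Big(\int M^{n_1+\cdots+n_k}\Big),
\]
which is precisely the statement that $\ev_M\colon(\SURJ,\diamond)\to\mathfrak M$ is an (order-preserving, hence genuine) algebra homomorphism into the noncommutative matrix algebra. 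Moreover $\ev_M$ is graded: a surjection in $\Surj_n$ is carried to a weight-$n$ iterated integral, and since $\diamond$ adds these degrees while the unit $p_0$ maps to $\int M^0=\mathrm{Id}$, the weight grading is respected throughout.

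Next I would evaluate $\ev_M$ on the element $I=\sum_{n\geq 0}p_n\in\SURJ$ introduced before Theorem~\ref{mainth}. Since $p_n\big(\int M^n\big)=\int M^n$, one obtains
\[
	\ev_M(I)=\sum_{n\geq 0}\int M^n=X.
\]
Both $I=p_0+(p_1+p_2+\cdots)$ and $X=\mathrm{Id}+\big(\int M^1+\int M^2+\cdots\big)$ have constant term equal to the unit and are otherwise supported in strictly positive degree, so the formal series $\log(1+u)=\sum_{k\geq1}\tfrac{(-1)^{k-1}}{k}u^k$ is well defined on each. Because $\ev_M$ is a graded algebra homomorphism with $\ev_M(p_0)=\mathrm{Id}$, it intertwines these series term by term, giving $\log(X)=\log(\ev_M(I))=\ev_M(\log(I))$.

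Finally I would substitute the expression for $\log(I)$ from Theorem~\ref{mainth} and push it through $\ev_M$. Since $\ev_M(D_I^n)=\sum_{f\in\mathrm{Desc}_I}f\big(\int M^n\big)=D_I^n\int M^n$ by linearity, this produces exactly
\[
	\log(X)=\sum_{n=1}^\infty\sum_{I\subseteq[n-1]}\frac{(-1)^{|I|}}{n}\cdot{n-1\choose |I|}^{-1}D_I^n\int M^n,
\]
as claimed. The only genuinely delicate point is the justification that the graded homomorphism $\ev_M$ commutes with the infinite series defining the logarithm. This is handled purely formally: since $I-p_0$ is supported in degree $\geq 1$, the power $(I-p_0)^k$ has lowest degree $k$, so in each fixed weight $n$ only the finitely many terms with $k\leq n$ contribute; the identity $\log\circ\ev_M=\ev_M\circ\log$ therefore holds degree by degree, and no analytic convergence is needed, only the weight-graded (formal) completion already implicit in the definition of $X$.
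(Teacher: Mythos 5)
Your proposal is correct and follows essentially the same route as the paper: the matrix product rule of the preceding proposition makes the evaluation $f\mapsto f\big(\int M^n\big)$ an algebra morphism from $(\SURJ,\diamond)$, so the formula is just the image of $\log(I)$ as computed in Theorem~\ref{mainth}. You have merely made explicit (via the graded evaluation map $\ev_M$ and the degree-by-degree argument for commuting with $\log$) what the paper leaves implicit.
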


This formula may provide the basis for interesting numerical properties. For instance, truncating the expansion of $\log(X)$ at order $k$, that is, looking at 
$$
	\sum_{n=1}^k \sum_{I\subseteq [n-1]}\frac{(-1)^{|I|}}{n}\cdot {n-1\choose |S|}^{-1} D_{ I}^n\int M^n,
$$ 
and applying the exponential map, can be expected (in view of similar phenomena in the deterministic case) to provide a better approximation to $X$, than the truncation of the original expansion $\sum_{i=0}^k \int M^n$.

The first few terms of the expansion of $\log(X)$ read:
\begin{align*}
	& \log(X) =\int M+\left( \frac{1}{2}(12)-\frac{1}{2}((21)+(11))\right) \int M^2\\
			& \quad +\Big(\frac{1}{3}(123)-\frac{1}{6}\big((213)+(312)+(112)+(212)\big)\\
			& \quad -\frac{1}{6}\big((132)+(231)+(122)+(121))\\
			& \quad +\frac{1}{3}((321)+(211)+(111)+(221)\big)\Big)\int M^3 + \cdots.
\end{align*}
We remind the reader that we represent a surjection $f \in \Surj_k$ by the sequence of its values $(f(1) \cdots f(k))$.

For example, for $2\times 2$ matrices, the $(1,2)$-entry of the term $(11) \int M^2$ is given by $\sum_{i\leq 2}\int M^{1,i}\star M^{i,2}$; the one of $(231)\int M^3$ reads $\sum_{i,j\leq 2}\int M^{j,2}M^{1,i}M^{i,j}$; the one of $(221)\int M^3$ reads $\sum_{i,j\leq 2}\int M^{j,2}(M^{1,i}\star M^{i,j}).$

%%%%%%%%%%%%%%%%%%%%%%%%%
%%%%%%%%%%%%%%%%%%%%%%%%%
%%%%%%%%%%%%%%%%%%%%%%%%%
%%%%%%%%%%%%%%%%%%%%%%%%%

\footnotesize

\normalsize

\begin{thebibliography}{99}


\bibitem{Baudoin}
	F.~Baudoin,
	An Introduction To The Geometry Of Stochastic Flows,
	World Scientific Publishing Company, Singapour, 2005.
	
\bibitem{Benarous} 
	G\'erard Ben Arous, 
	{\emph{Flots et series de Taylor stochastiques}},
	Probab.~Th.~Rel.~Fields {\bf{81}}, no.~1, 29 (1989). %--77

\bibitem{Cartier} 
	P.~Cartier, 
	{\emph{On the structure of free Baxter algebras}},
         Adv.~Mathematics {\bf{9}}, 253 (1972). %--265
         
\bibitem{Chapoton} 
	F.~Chapoton, 
	{\emph{Alg\`ebres de Hopf des permutah\`edres, associah\`edres et hypercubes}},
	Adv.~in Mathematics {\bf{150}}, 264 (2000). %--275


\bibitem{cemw} 
	C.~Curry, K.~Ebrahimi-Fard, S.J.~Malham, A.~Wiese,
	{\emph{L\'evy Processes and Quasi-Shuffle Algebras}},
	 Stochastics {\bf{86}}, 632 (2014). %--642

\bibitem{chen0} 
	K.T.~Chen, 
	{\emph{Integration of paths, geometric invariants and a generalized Baker--Hausdorff formula}}, 
	Ann.~Math.~{\bf{65}}, 163 (1957). %--178.

\bibitem{chen1} 
	K.T.~Chen, 
	{\emph{Algebras of iterated path integrals and fundamental groups}}, 
	Transactions of the American Mathematical Society {\bf{156}}, 359  (1971). %--379
	
\bibitem{chen2} 
	K.T.~Chen, 
	{\emph{Iterated path integrals}}, 
	Bulletin of the American Mathematical Society {\bf{83}}(5), 831 (1977). %--879
	

\bibitem{EM} 
	S.~Eilenberg, S.~Mac Lane,
	{\emph{On the Groups $H(\pi, n)$}}, 
	Annals of Mathematics, Second Series, {\bf{58}}, no.~1, 55 (1953). %--106
	
\bibitem{ku2002}	
	K.~Ebrahimi--Fard, 
	{\emph{Loday-type algebras and the Rota--Baxter relation}} 
	Letters in Mathematical Physics {\bf{61}}(2), 139 (2002). %--147

\bibitem{eg} 
	K.~Ebrahimi--Fard, L. Guo,
	{\emph{Quasi-shuffles, Mixable Shuffles, and Hopf Algebras}},
	Journal of Algebraic Combinatorics \textbf{24}(1), 83,  (2006).%--101.


\bibitem{kp2013} 
	K.~Ebrahimi-Fard, F.~Patras, 
	{\emph{La structure combinatoire du calcul int\'egral}}, 
	Gazette des Math\'ematiciens {\bf{138}}, (2013).

\bibitem{kf2013} 
	K.~Ebrahimi-Fard, F.~Patras, 
	{\emph{The pre-Lie structure of the time-ordered exponential}} 
	Letters in Mathematical Physics {\bf{104}}, no.~10, 1281 (2014).%--1302..

\bibitem{fliess} 
	M.~Fliess, 
	{\emph{Fonctionnelles causales non lin\'eaires et ind\'etermin\'ees non commutatives}} 
	Bulletin de la soci\'et\'e math\'ematique de France {\bf{109}}, 3 (1981).%--40.

\bibitem{fpt} 
	L.~Foissy, F.~Patras, J.-Y.~Thibon, 
	{\emph{Deformations of shuffles and quasi-shuffles}},
	preprint, arXiv:1311.1464.

\bibitem{Gaines} 
	J.~Gaines,
	{\emph{The algebra of iterated stochastic integrals}},
	Stochastics and Stoch.~Reports \textbf{49}, 169 (1994). %--179
 
\bibitem{NCSF}
	I.~M.~Gelfand, D.~Krob, A.~Lascoux, B.~Leclerc, V.~S.~Retakh, J.-Y.~Thibon,
	{\emph{Noncommutative symmetric functions}},	
	Adv.~in Mathematics {\bf 112}, 218 (1995). %--348.

\bibitem{FH}
	F.~Hivert,
	{\emph{Combinatoire des fonctions quasi-sym\'etriques}},
	Th\`ese de doctorat (Doctoral thesis), 1999.

\bibitem{Hoffman} 
	M.~E.~Hoffman, 
	{\emph{Quasi-shuffle products}} 
	Journal of Algebraic Combinatorics \textbf{11}, 49 (2000).%--68.


\bibitem{Jacod2002} 
	J.~Jacod, A.N.~Shiryaev,  
	Limit Theorems for Stochastic Processes.
	Berlin, etc: Springer 2002.



\bibitem{LiLiu} 
	C.W~Li, X.Q.~Liu, 
	{\emph{Algebraic structure of multiple stochastic integrals with respect to Brownian motions and Poisson processes}}, 
	Stochastics and Stoch.~Reports \textbf{61}, 107 (1997). %--120.

\bibitem{lyons} 
	T.~Lyons, M.~Caruana,  T.~L\'evy, 
	{\emph{Differential Equations Driven by Rough Paths}}, 
	\'Ecole d'\'et\'e des probabilit\'es de Saint-Flour XXXIV?2004 (J. ~Picard, ed.). 
	Lecture Notes in Mathematics {\bf{1908}}.



\bibitem{MP70}
    	B.~Mielnik, J.~Pleba\'nski,
    	{\textsl{Combinatorial approach to Baker--Campbell--Hausdorff exponents}}
    	Ann.~Inst.~Henri Poincar\'e A {\bf{XII}}, 215 (1970). %--254 

\bibitem{npt} 
	J.C.~Novelli, F.~Patras, J.-Y.Thibon,
	{\emph{Natural endomorphisms of quasi-shuffle Hopf algebras}} 
	2011, arXiv:1101.0725v1. To appear in Bull. Soc. Math. de France.

 

\bibitem{Fred94} 
	F.~Patras, 
	{\emph{L'alg\`ebre des descentes d'une big\`ebre gradu\'ee}}, 
	J.~Algebra {\bf{170}}, 2, 547 (1994). %--566.

\bibitem{Protter} 
	P.~Protter, 
	{Stochastic Integration and Differential Equations}, 
	Berlin, Springer 1992.

\bibitem{Reutenauer} 
	Ch.~Reutenauer, 
	{{Free Lie algebras}}.
	London Mathematical Society Monographs New Series 7. Oxford Science Publications 1993.

\bibitem{schutz}
	M.~P.~Sch\"utzenberger, 
	{\emph{Sur une propri\'et\'e combinatoire des alg\`ebres de Lie libres pouvant \^etre utilis\'ee 
	dans un probl\`eme de math\'ematiques appliqu\'ees}}, 
	S\'eminaire Dubreil--Jacotin Pisot (Alg\`ebre et th\'eorie des nombres), Paris, Ann\'ee 1958/59.

\bibitem{Strichartz}
    	R.~S.~Strichartz,
    	{\textsl{The Campbell--Baker--Hausdorff--Dynkin formula and solutions of differential equations}},
    	J.~Func.~Anal.~{\bf{72}}, 320 (1987). %--345 
	

\end{thebibliography}
\end{document}